\documentclass{cimart}
\usepackage{graphicx}
\usepackage[matrix,arrow,curve,cmtip]{xy}
\usepackage{amsfonts}
\usepackage{comment}
\usepackage{hyperref}
\usepackage{txfonts}
\usepackage{xcolor}
\usepackage[colorlinks=true]{hyperref}
\hypersetup{
    linkcolor=red,
    allcolors=red
}

\usepackage{amsmath,amssymb,amsthm,tikz-cd,array}

\newcommand{\F}{\mathbb{F}}
\newcommand{\I}{\mathcal{I}}
\newcommand{\starr}{\star_R}

\renewcommand{\makeeditinfo}{}

\title{
    A New Approach to Rota–Baxter Operators on Vertex Algebras via Integrated $\lambda‑$Brackets and Their Cohomological Consequences
    }

\authors{
    Hassan Alhussein}

\authorinfo[
    H. Alhussein]{
    Siberian State University of Telecommunication and Informatics Science, Novosibirsk, Russia,\\
Novosibirsk State University of Economics and Management, Novosibirsk,  Russia,\\
Novosibirsk State University, Novosibirsk,  Russia}{%
    hassanalhussein2014@gmail.com
    }

\abstract{%
    We study Rota--Baxter operators on vertex algebras using the integrated $\lambda$-bracket formalism. A Rota--Baxter operator produces a deformed vertex algebra structure, and the difference between the deformed and original brackets yields a two-cocycle in vertex algebra cohomology. This generalizes the classical relation between Rota--Baxter operators and two-cocycles. We also characterize when this two-cocycle is trivial, showing that non-scalar operators give rise to non-trivial cohomology classes.
    }

\keywords{
    Vertex algebras, Rota--Baxter operators, integrated $\lambda$-bracket, vertex algebra cohomology}

\msc{
    17B69 (primary); 17A30, 17B56, 16T10, 18M75, 81T40 (secondary)
    }

\begin{document}

 \section{Introduction}

Vertex algebras, introduced by Borcherds \cite{Borcherds1986} and further developed by Frenkel, Lepowsky, and Meurman \cite{FrenkelLepowskyMeurman1988}, have become a cornerstone of modern mathematical physics and representation theory. They provide an algebraic framework for conformal field theory, play a central role in the monstrous moonshine conjectures, and have deep connections to infinite-dimensional Lie algebras, geometric Langlands program, and integrable systems \cite{Kac1998, FrenkelBenZvi2004}.

A vertex algebra can be viewed as a rich algebraic structure that packages infinitely many binary operations into a single vertex operator $Y(a,z) = \sum_{n\in\mathbb{Z}} a_n z^{-n-1}$. An equivalent formulation, due to Bakalov and Kac \cite{BakalovKac2003, BakalovDeSoleHeluaniKac2018}, replaces the vertex operator $Y(a,z)$ with the integrated $\lambda$-bracket $\mathcal{I}_{a,b}(\lambda) = \int^\lambda d\sigma\, [a_\sigma b]$, which satisfies a set of axioms (sesquilinearity, skew-symmetry, and Jacobi identity) that parallel those of Lie conformal algebras. This integrated formalism provides a more compact and conceptual approach to vertex algebras and their cohomology.

\subsection*{Rota--Baxter operators: from associative algebras to vertex algebras}

Rota--Baxter operators originated in the probability work of Baxter \cite{Baxter1960} and have since become ubiquitous in algebra, combinatorics, and mathematical physics. On an associative algebra $A$, a Rota--Baxter operator $R: A \to A$ of weight $\lambda$ satisfies
\[
R(x)R(y) = R(R(x)y) + R(xR(y)) + \lambda R(xy), \qquad x,y\in A.
\]
Such operators appear in the theory of the classical Yang--Baxter equation \cite{SemenovTianShansky1983, Kupershmidt1999}, in the algebraic approach to renormalization of quantum field theory \cite{ConnesKreimer2000}, and in the construction of dendriform algebras. A fundamental property of Rota--Baxter operators on associative algebras is that they produce a deformed product $x \star_R y = xR(y) + R(x)y + \lambda xy$, and the difference $\phi(x,y) = x\star_R y - xy$ is a Hochschild $2$-cocycle. This cohomological interpretation is central to deformation theory.

Given the success of Rota--Baxter operators in the associative and Lie settings, it is natural to seek their analog for vertex algebras. The first steps in this direction were taken by Xu \cite{Xu1993}, who introduced $R$-matrices for vertex operator algebras as solutions to a modified Yang--Baxter equation. More recently, Bai, Guo, Liu, and Wang \cite{BaiGuoLiuWang2023} systematically developed the theory of Rota--Baxter operators on vertex operator algebras, establishing connections with $\lambda$-derivations, dendriform vertex algebras, and the classical Yang--Baxter equation. Concurrently, Bai, Guo, and Liu \cite{BaiGuoLiu2023} introduced the vertex operator Yang--Baxter equation (VOYBE) as a tensor analog of the classical Yang--Baxter equation for vertex algebras and established a correspondence between its solutions and relative Rota--Baxter operators.

\subsection*{The cohomology of vertex algebras}

On the other hand, the cohomology theory of vertex algebras has been developed from several perspectives. Borcherds \cite{Borcherds1998} defined a cohomology complex for vertex algebras based on the Jacobi identity. Huang \cite{Huang2014} introduced a cohomology theory for grading-restricted vertex algebras, identifying the first cohomology with derivations. Liberati \cite{Liberati2017} studied cohomology of vertex algebras with coefficients in modules.

A more systematic operadic approach was taken by Bakalov, De Sole, Heluani, and Kac \cite{BakalovDeSoleHeluaniKac2018}. They constructed the chiral operad $P^{\mathrm{ch}}$ governing vertex algebras, whose associated graded is the classical operad $P^{\mathrm{cl}}$ governing Poisson vertex algebras. This construction yields a cohomology complex for any non-unital vertex algebra, where the differential is given by the adjoint action of an odd element $X\in W_1^{\mathrm{ch}}(\Pi V)$ satisfying $X\square X = 0$. The low-degree cohomology groups have natural interpretations: $H^0$ parametrizes Casimirs, $H^1$ classifies derivations modulo inner derivations, and $H^2$ corresponds to extensions.

\subsection*{Our contribution}

In this paper, we bridge the two developments described above. We define Rota--Baxter operators on vertex algebras using the integrated $\lambda$-bracket formalism of Bakalov and Kac \cite{BakalovKac2003}. For a Rota--Baxter operator $P: V \to V$ of weight $\lambda$ that commutes with the translation operator $\partial$, we construct a deformed integrated $\lambda$-bracket
\[
\mathcal{I}^{\star_R}_{a,b}(\mu) = \mathcal{I}_{a, Pb}(\mu) + \mathcal{I}_{Pa, b}(\mu) + \lambda \mathcal{I}_{a,b}(\mu).
\]
We prove that $(V, \partial, \mathcal{I}^{\star_R})$ is a non-unital vertex algebra, and that $P$ becomes a homomorphism from $(V, \star_R)$ to the original vertex algebra $(V, \mathcal{I})$. This generalizes the classical construction of the deformed product $x\star_R y$ to the vertex algebra setting.

Our main result is that the difference
\[
\Phi_{a,b}(\mu) = \mathcal{I}^{\star_R}_{a,b}(\mu) - \mathcal{I}_{a,b}(\mu)
\]
is a $2$-cocycle in the vertex algebra cohomology of $(V, \star_R)$ with coefficients in a $P$-twisted module. This provides a vertex algebra analog of the classical relation between Rota--Baxter operators and Hochschild $2$-cocycles. We also determine when $\Phi$ is a coboundary, showing that non-scalar Rota--Baxter operators give rise to non-trivial cohomology classes.

\subsection*{Organization of the paper}

The paper is organized as follows. In Section 2, we recall the integrated $\lambda$-bracket formalism of vertex algebras and define Rota--Baxter operators on them. We also provide examples, including projection operators on lattice vertex algebras and the Heisenberg vertex algebra, we construct the deformed bracket $\mathcal{I}^{\star_R}$ and prove that it defines a non-unital vertex algebra, establishing the homomorphism property of $P$. Section 3 is devoted to vertex algebra cohomology: we prove that $\Phi = \mathcal{I}^{\star_R} - \mathcal{I}$ is a $2$-cocycle. We also discuss when $\Phi$ is a coboundary.

\subsection*{Conventions}

Throughout this paper, $\mathbb{F}$ is a field of characteristic $0$. All vector spaces are over $\mathbb{F}$. The translation operator is denoted by $\partial$, and the integrated $\lambda$-bracket by $\mathcal{I}_{a,b}(\mu)$. For a linear map $P: V \to V$, we extend it coefficient-wise to $V[\mu]$ by $P(\sum v_n \mu^n) = \sum P(v_n)\mu^n$.

\section{Rota--Baxter Operators on Vertex Algebras}

\begin{definition}[Vertex Algebra \cite{BakalovDeSoleHeluaniKac2018,DSK2006}]
Let $\mathbb{F}$ be a field of characteristic $0$. A \textbf{vertex algebra} consists of:
\begin{itemize}
    \item A vector superspace $V$ over $\mathbb{F}$.
    \item An even vector $|0\rangle \in V$ called the \textbf{vacuum}.
    \item An endomorphism $\partial: V \to V$ called the \textbf{translation operator}.
    \item A linear map, called the \textbf{integrated $\lambda$-bracket},
    \[
    \mathcal{I}: V \otimes V \longrightarrow V[\lambda], \qquad
    a \otimes b \longmapsto \mathcal{I}_{a,b}(\lambda) = \int^{\lambda} d\sigma\, [a_\sigma b],
    \]
    where $\lambda$ is a formal variable.
\end{itemize}
These data are required to satisfy the following axioms for all $a,b,c \in V$:
\begin{align}
\text{(V1)}&\quad \mathcal{I}_{|0\rangle,b}(\lambda)=b,\quad \mathcal{I}_{a,|0\rangle}(\lambda)=a,\\
\text{(V2)}&\quad \frac{d}{d\lambda}\mathcal{I}_{\partial a,b}(\lambda)=-\lambda \frac{d}{d\lambda}\mathcal{I}_{a,b}(\lambda),\quad \partial\mathcal{I}_{a, b}(\lambda)=\mathcal{I}_{\partial a, b}(\lambda)+\mathcal{I}_{a,\partial b}(\lambda),\\
\text{(V3)}&\quad \mathcal{I}_{b,a}(\lambda)=(-1)^{\varepsilon(a)\varepsilon(b)}\mathcal{I}_{a,b}(-\lambda-\partial),\\
\text{(V4)}&\quad \mathcal{I}_{a,\mathcal{I}_{b,c}(\mu)}(\lambda)-(-1)^{\varepsilon(a)\varepsilon(b)}\mathcal{I}_{b,\mathcal{I}_{a,c}(\mu)}(\lambda)=\mathcal{I}_{\mathcal{I}_{a,b}(\lambda)-\mathcal{I}_{a,b}(-\mu-\partial),c}(\lambda+\mu),
\end{align}
where $\varepsilon(a)\in\{0,1\}$ denotes the parity of a homogeneous element $a\in V$.
\end{definition}

\begin{remark}[Non-unital Vertex Algebra \cite{BakalovDeSoleHeluaniKac2018}]
A \textbf{non-unital vertex algebra} is defined by the same axioms except the vacuum axioms (V1) are omitted. In this case, there is no distinguished vector $|0\rangle$ satisfying the unit-like properties. All other axioms (V2), (V3), (V4) remain unchanged.

Non-unital vertex algebras appear naturally in deformation theory and in the study of vertex algebras without a vacuum (e.g., certain chiral algebras in conformal field theory where the vacuum is not preserved by the deformation).
\end{remark}

\begin{example}[Heisenberg Vertex Algebra \cite{BakalovDeSoleHeluaniKac2018}]
Let $\mathfrak{h}$ be a finite-dimensional vector space with a non-degenerate symmetric bilinear form. The Heisenberg vertex algebra $\pi_{\mathfrak{h}}$ has $V = S(\mathfrak{h}[t^{-1}]t^{-1})$ (the Fock space). The $\lambda$-bracket is given by $[a_\lambda b] = \lambda \langle a,b \rangle$ for $a,b \in \mathfrak{h}$, extended via the Wick formula. This is the fundamental example in free field theory.
\end{example}

\begin{definition}[Rota--Baxter Operator of Weight $\lambda$]
Let $\lambda \in \mathbb{F}$ be a fixed scalar. A linear map $P: V \to V$ is called a \textbf{Rota--Baxter operator of weight $\lambda$} on the vertex algebra $V$ if:
\begin{itemize}
    \item $P$ commutes with the translation operator: $P \circ \partial = \partial \circ P$,
    \item Extending $P$ to $V[\mu]$ coefficient-wise by $P(\sum v_n \mu^n) = \sum P(v_n) \mu^n$, the following identity holds for all $a, b \in V$:
    \[
    \mathcal{I}_{Pa, Pb}(\mu) = P\bigl( \mathcal{I}_{Pa, b}(\mu) + \mathcal{I}_{a, Pb}(\mu) + \lambda \mathcal{I}_{a, b}(\mu) \bigr).
    \]
\end{itemize}
\end{definition}

\begin{example}[Projection Operator]
Let $V$ be a vertex algebra and $V = V_+ \oplus V_-$ a decomposition into subalgebras. Define $P: V \to V$ as the projection onto $V_+$ along $V_-$. Then $P$ is a Rota--Baxter operator of weight $-1$ if $V_{\pm}$ are subalgebras. Indeed, $\I_{Pa,Pb} = \I_{a_+,b_+}$, while $P(\I_{Pa,b} + \I_{a,Pb} - \I_{a,b}) = P(\I_{a_+,b} + \I_{a,b_+} - \I_{a,b}) = \I_{a_+,b_+}$ because cross terms vanish.
\end{example}
\begin{theorem}
Let $(V, \partial, \I)$ be a vertex algebra (without vacuum) and let $P: V \to V$ be a Rota--Baxter operator of weight $\lambda \in \F$ such that $[P, \partial] = 0$. Define a new integrated $\lambda$-bracket $\I^{\starr}$ on $V$ by
\[
\;\I^{\starr}_{a,b}(\mu) \;:=\; \I_{a, Pb}(\mu) \;+\; \I_{Pa, b}(\mu) \;+\; \lambda \, \I_{a, b}(\mu)\;  \qquad \forall a,b \in V.
\]
Then $(V, \partial, \I^{\starr})$ is a \textbf{non-unital vertex algebra} (i.e., it satisfies the sesquilinearity, skew-symmetry, and Jacobi axioms, but not necessarily the vacuum axioms).

Moreover, the map $P: (V, \starr) \to (V, \I)$ is a homomorphism of non-unital vertex algebras:
\[
\I_{Pa, Pb}(\mu) = P\bigl( \I^{\starr}_{a,b}(\mu) \bigr).
\]
\end{theorem}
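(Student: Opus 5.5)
The proof proceeds axiom by axiom. The homomorphism property is checked first, because the Jacobi identity will be derived from it in the classical Rota--Baxter style.

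\textbf{Homomorphism.} The identity $\I_{Pa,Pb}(\mu)=P(\I^{\starr}_{a,b}(\mu))$ is literally the defining Rota--Baxter identity rewritten. We only use that $P$ acts coefficient-wise on $V[\mu]$, so that it is additive across the three summands.

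\textbf{Sesquilinearity (V2).} Both parts are linear in the bracket, and $P$ commutes with $\partial$. For the first part, $\frac{d}{d\mu}\I^{\starr}_{\partial a,b}$ expands into $\frac{d}{d\mu}\I_{\partial a,Pb}+\frac{d}{d\mu}\I_{P\partial a,b}+\lambda\frac{d}{d\mu}\I_{\partial a,b}$. Rewrite $P\partial a=\partial Pa$ and apply (V2) for $\I$ termwise. For the second part, write $\partial\I^{\starr}_{a,b}$ as three terms. Apply the Leibniz rule for $\I$ to each, and use $P\partial=\partial P$ to regroup the result as $\I^{\starr}_{\partial a,b}+\I^{\starr}_{a,\partial b}$.

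\textbf{Skew-symmetry (V3).} Apply (V3) for $\I$ to each summand of $\I^{\starr}_{b,a}(\mu)$:
\[
\I_{b,Pa}(\mu)=\pm\I_{Pa,b}(-\mu-\partial),\qquad
\I_{Pb,a}(\mu)=\pm\I_{a,Pb}(-\mu-\partial).
\]
Summing gives $\pm\I^{\starr}_{a,b}(-\mu-\partial)$. Here we tacitly assume that $P$ preserves parity, i.e.\ is even. This should be stated, since otherwise the signs $(-1)^{\varepsilon(Pa)\varepsilon(b)}$ would not match.

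\textbf{Jacobi (V4), the main obstacle.} We mimic the associative/Lie proof that the Rota--Baxter deformed product is again Lie.

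\emph{Expanding the left side.} Expand $\I^{\starr}_{a,\I^{\starr}_{b,c}(\nu)}(\mu)$ fully into nine terms of the form $\I_{x,\I_{y,z}}$, where $x,y,z$ are among $a,Pa,b,Pb,c,Pc$. The inner terms of type $\I_{a,P(\I^{\starr}_{b,c})}$ arise when $P$ hits the inner bracket. Use the homomorphism property to replace $P(\I^{\starr}_{b,c}(\nu))$ by $\I_{Pb,Pc}(\nu)$. This is legitimate coefficient-wise, because the outer bracket is extended $\F[\nu]$-linearly.

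\emph{Expanding the right side.} Expand $\I^{\starr}_{\I^{\starr}_{a,b}(\mu)-\I^{\starr}_{a,b}(-\nu-\partial),c}(\mu+\nu)$ the same way. Here $P$ applied to the first argument must be pushed through the substitution $\mu\mapsto-\nu-\partial$; this requires $[P,\partial]=0$, so that $P(\I^{\starr}_{a,b}(-\nu-\partial))=\I_{Pa,Pb}(-\nu-\partial)$.

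\emph{Matching.} The terms then group by which of $a,b,c$ carry a $P$. Each group is exactly an instance of (V4) for $\I$ applied to the triples $(Pa,Pb,c)$, $(Pa,b,Pc)$, $(a,Pb,Pc)$, with $\lambda$-multiples of $(Pa,b,c)$, $(a,Pb,c)$, $(a,b,Pc)$ and $\lambda^2$ times $(a,b,c)$. Careful bookkeeping of the nine-by-three terms, including the $\lambda$-weights, shows that the coefficients match: each triple appears with the same total weight on both sides.

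The hardest point is handling the $-\nu-\partial$ substitution consistently. I would first verify the matching in the purely bracket-level (non-integrated) notation and then translate. Finally, the failure of (V1) is noted: $\I^{\starr}_{|0\rangle,b}$ need not equal $b$. This is why only a non-unital structure is claimed.
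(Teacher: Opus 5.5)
Your proposal is correct and follows essentially the same route as the paper: (V2) and (V3) are checked termwise using $[P,\partial]=0$. The Jacobi identity is obtained by expanding both sides, applying (V4) for $\I$ to the triples $(a,Pb,Pc)$, $(Pa,b,Pc)$, $(Pa,Pb,c)$ (weight $1$), $(Pa,b,c)$, $(a,Pb,c)$, $(a,b,Pc)$ (weight $\lambda$) and $(a,b,c)$ (weight $\lambda^2$), and using the homomorphism identity together with $[P,\partial]=0$ to evaluate $P$ on $\I^{\starr}_{a,b}(-\nu-\partial)$. Your remark that $P$ must be even is a genuine hypothesis the paper uses without stating it (it is needed for the signs in both (V3) and (V4)), and your separate spectral variables avoid the paper's clash between the weight $\lambda$ and the bracket variable.
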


\begin{proof}
We verify the three axioms for a non-unital vertex algebra.

\underline{Sesquilinearity (V2):}  First, using the definition of $\mathcal{I}^{\star_R}$ and the fact that $[P,\partial]=0$, we have
\[
\begin{aligned}
\frac{d}{d\mu}\mathcal{I}^{\star_R}_{\partial a, b}(\mu)
&= \frac{d}{d\mu}\mathcal{I}_{\partial a, Pb}(\mu)
 + \frac{d}{d\mu}\mathcal{I}_{\partial P a, b}(\mu)
 + \lambda \frac{d}{d\mu}\mathcal{I}_{\partial a, b}(\mu) \\
&= -\mu \frac{d}{d\mu}\mathcal{I}_{a, Pb}(\mu)
   -\mu \frac{d}{d\mu}\mathcal{I}_{P a, b}(\mu)
   -\lambda \mu \frac{d}{d\mu}\mathcal{I}_{a, b}(\mu) \\
&= -\mu \frac{d}{d\mu}\mathcal{I}^{\star_R}_{a, b}(\mu),
\end{aligned}
\]
where the second equality follows from the sesquilinearity of the original bracket $\mathcal{I}$ (cf.~(V2) for $(V,\mathcal{I})$).

Second, we verify the other sesquilinearity identity:
\[
\begin{aligned}
\mathcal{I}^{\star_R}_{\partial a, b}(\mu) + \mathcal{I}^{\star_R}_{a, \partial b}(\mu)
&= \mathcal{I}_{\partial a, Pb}(\mu) + \mathcal{I}_{a, \partial Pb}(\mu)
 + \mathcal{I}_{\partial P a, b}(\mu) + \mathcal{I}_{P a, \partial b}(\mu) \\
&\quad + \lambda\big(\mathcal{I}_{\partial a, b}(\mu) + \mathcal{I}_{a, \partial b}(\mu)\big) \\
&= \partial\mathcal{I}_{a, Pb}(\mu)
 + \partial\mathcal{I}_{P a, b}(\mu)
 + \lambda \partial\mathcal{I}_{a, b}(\mu) \\
&= \partial\mathcal{I}^{\star_R}_{a, b}(\mu),
\end{aligned}
\]
again using the sesquilinearity of $\mathcal{I}$. Thus $\mathcal{I}^{\star_R}$ satisfies (V2).

\underline{Skew-symmetry (V3):} Applying the original skew-symmetry to each term,
\begin{align*}
\I^{\starr}_{b,a}(\mu) &= \I_{b, Pa}(\mu) + \I_{Pb, a}(\mu) + \lambda \I_{b,a}(\mu) \\
&= (-1)^{\varepsilon(a)\varepsilon(b)} \bigl( \I_{Pa, b}(-\mu-\partial) + \I_{a, Pb}(-\mu-\partial) + \lambda \I_{a,b}(-\mu-\partial) \bigr) \\
&= (-1)^{\varepsilon(a)\varepsilon(b)} \I^{\starr}_{a,b}(-\mu-\partial).
\end{align*}
\underline{Jacobi identity (V4):} 
We prove that $\mathcal{I}^{\star_R}$ satisfies the integrated Jacobi identity (V4). Let
\[
\mathcal{I}^{\star_R}_{a,b}(\lambda)=A_{a,b}(\lambda)+B_{a,b}(\lambda)+\lambda\,\mathcal{I}_{a,b}(\lambda),
\]
where $A_{a,b}(\lambda):=\mathcal{I}_{a,Pb}(\lambda)$ and $B_{a,b}(\lambda):=\mathcal{I}_{Pa,b}(\lambda)$.
The Rota--Baxter condition gives the homomorphism property
\[
\mathcal{I}_{Pa,Pb}(\lambda)=P\bigl(\mathcal{I}^{\star_R}_{a,b}(\lambda)\bigr). \tag{H}
\]

Expanding the left-hand side of (V4) using the definition of $\mathcal{I}^{\star_R}$ and linearity of $\mathcal{I}$ in its second argument, we get:
\[
\begin{aligned}
L &=
\mathcal{I}_{a,\mathcal{I}_{Pb,Pc}}(\lambda)
+\mathcal{I}_{Pa,\mathcal{I}_{b,Pc}}(\lambda)
+\mathcal{I}_{Pa,\mathcal{I}_{Pb,c}}(\lambda) \\
&\quad +\lambda\,\mathcal{I}_{Pa,\mathcal{I}_{b,c}}(\lambda)
+\lambda\,\mathcal{I}_{a,\mathcal{I}_{b,Pc}}(\lambda)
+\lambda\,\mathcal{I}_{a,\mathcal{I}_{Pb,c}}(\lambda)
+\lambda^2\,\mathcal{I}_{a,\mathcal{I}_{b,c}}(\lambda) \\
&\quad -p(a,b)\Big(
\mathcal{I}_{b,\mathcal{I}_{Pa,Pc}}(\lambda)
+\mathcal{I}_{Pb,\mathcal{I}_{a,Pc}}(\lambda)
+\mathcal{I}_{Pb,\mathcal{I}_{Pa,c}}(\lambda) \\
&\qquad +\lambda\,\mathcal{I}_{Pb,\mathcal{I}_{a,c}}(\lambda)
+\lambda\,\mathcal{I}_{b,\mathcal{I}_{a,Pc}}(\lambda)
+\lambda\,\mathcal{I}_{b,\mathcal{I}_{Pa,c}}(\lambda)
+\lambda^2\,\mathcal{I}_{b,\mathcal{I}_{a,c}}(\lambda)
\Big).
\end{aligned}
\]

Applying the original Jacobi identity (V4) for $\mathcal{I}$ to the triples
$(a,Pb,Pc)$, $(Pa,b,Pc)$, $(Pa,Pb,c)$, $(a,b,Pc)$, $(a,Pb,c)$, $(Pa,b,c)$, and $(a,b,c)$,
and then using the homomorphism property (H), we obtain:
\[
\begin{aligned}
L &=
\mathcal{I}_{\mathcal{I}_{a,Pb}-\mathcal{I}_{a,Pb}(-\mu-\partial),\,Pc}(\lambda+\mu)
+\mathcal{I}_{\mathcal{I}_{Pa,b}-\mathcal{I}_{Pa,b}(-\mu-\partial),\,Pc}(\lambda+\mu)
+\mathcal{I}_{\mathcal{I}_{Pa,Pb}-\mathcal{I}_{Pa,Pb}(-\mu-\partial),\,c}(\lambda+\mu) \\
&\quad +\lambda\,\mathcal{I}_{\mathcal{I}_{a,b}-\mathcal{I}_{a,b}(-\mu-\partial),\,Pc}(\lambda+\mu)
+\lambda\,\mathcal{I}_{\mathcal{I}_{a,Pb}-\mathcal{I}_{a,Pb}(-\mu-\partial),\,c}(\lambda+\mu)
+\lambda\,\mathcal{I}_{\mathcal{I}_{Pa,b}-\mathcal{I}_{Pa,b}(-\mu-\partial),\,c}(\lambda+\mu) \\
&\quad +\lambda^2\,\mathcal{I}_{\mathcal{I}_{a,b}-\mathcal{I}_{a,b}(-\mu-\partial),\,c}(\lambda+\mu).
\end{aligned}
\]

Let $\Delta_{a,b}:=\mathcal{I}^{\star_R}_{a,b}(\lambda)-\mathcal{I}^{\star_R}_{a,b}(-\mu-\partial)$.
Using the definition of $\mathcal{I}^{\star_R}$ and (H), the above expression simplifies to:
\[
L = \mathcal{I}_{\Delta_{a,b},\,Pc}(\lambda+\mu)
+ \mathcal{I}_{P(\Delta_{a,b}),\,c}(\lambda+\mu)
+ \lambda\,\mathcal{I}_{\Delta_{a,b},\,c}(\lambda+\mu).
\]

But this is exactly the right-hand side of (V4):
\[
R = \mathcal{I}^{\star_R}_{\Delta_{a,b},\,c}(\lambda+\mu)
= \mathcal{I}_{\Delta_{a,b},\,Pc}(\lambda+\mu)
+ \mathcal{I}_{P(\Delta_{a,b}),\,c}(\lambda+\mu)
+ \lambda\,\mathcal{I}_{\Delta_{a,b},\,c}(\lambda+\mu).
\]

Hence $L=R$, and $\mathcal{I}^{\star_R}$ satisfies the Jacobi identity (V4).
\end{proof}

\section{Relation: Rota--Baxter Operators Produce 2-Cycles}

Throughout this section, let $(V, \mathcal{I}, \partial)$ be a non-unital vertex algebra over a field $\mathbb{F}$ of characteristic $0$, and let $P: V \to V$ be a Rota--Baxter operator of weight $\lambda \in \mathbb{F}$ satisfying $[P, \partial] = 0$. Recall the deformed bracket
\[
\mathcal{I}^{\star_R}_{a,b}(\mu) := \mathcal{I}_{a, Pb}(\mu) + \mathcal{I}_{Pa, b}(\mu) + \lambda \mathcal{I}_{a,b}(\mu),
\]
which defines a non-unital vertex algebra structure $(V, \star_R)$ on $V$.


We first recall the basic notions of vertex algebra cohomology in the integrated $\lambda$-bracket formalism.

\begin{definition}[2-Cochain \cite{BakalovDeSoleHeluaniKac2018}]
A \textbf{2-cochain} on $V$ with values in $V$ is a linear map
\[
\Phi: V \otimes V \longrightarrow V[\mu]
\]
satisfying the following sesquilinearity conditions for all $a, b \in V$:
\[
\frac{d}{d\mu}\Phi_{\partial a, b}(\mu) = -\mu \frac{d}{d\mu}\Phi_{a,b}(\mu), \qquad
\partial \, \Phi_{a,b}(\mu) = \Phi_{\partial a, b}(\mu) + \Phi_{a, \partial b}(\mu).
\]
\end{definition}

\begin{definition}[2-Cocycle \cite{BakalovDeSoleHeluaniKac2018}]
A $2$-cochain $\Phi$ is called a \textbf{2-cocycle}  if for all $a, b, c \in V$,
\[
\mathcal{I}_{a, \Phi_{b,c}(\mu)}(\lambda)
\;-\; (-1)^{\varepsilon(a)\varepsilon(b)}\, \mathcal{I}_{b, \Phi_{a,c}(\mu)}(\lambda)
\;=\; \Phi_{\mathcal{I}_{a,b}(\lambda)-\mathcal{I}_{a,b}(-\mu-\partial),\,c}(\lambda+\mu),
\]
as an identity in $V[\lambda, \mu]$.
\end{definition}

\begin{remark}
This is the vertex algebra analog of the Hochschild $2$-cocycle condition for associative algebras. The parameter $\mu$ plays the role of the spectral parameter, and $\nu$ is an independent formal variable.
\end{remark}


We now construct a natural $2$-cochain from the Rota--Baxter operator $P$.

\begin{definition}
Define $\Phi: V \otimes V \to V[\mu]$ by
\[
\Phi_{a,b}(\mu) \;:=\; \mathcal{I}^{\star_R}_{a,b}(\mu) \;-\; \mathcal{I}_{a,b}(\mu)
\;=\; \mathcal{I}_{a, Pb}(\mu) \;+\; \mathcal{I}_{Pa, b}(\mu) \;+\; (\lambda - 1) \, \mathcal{I}_{a,b}(\mu)
\]
for all $a, b \in V$.
\end{definition}

\begin{remark}
The sesquilinearity conditions for $\Phi$ follow directly from the corresponding properties of $\mathcal{I}^{\star_R}$ and $\mathcal{I}$, which are guaranteed by $[P, \partial] = 0$. Hence $\Phi$ is indeed a $2$-cochain.
\end{remark}

\begin{theorem}
Let $(V, \mathcal{I}, \partial)$ be a non-unital vertex algebra and let $P: V \to V$ be a Rota--Baxter operator of weight $\lambda$ such that $[P, \partial] = 0$. Let $M $ be the $P$- module over the deformed vertex algebra $(V, \star_R)$ with left action
\[
\mathcal{I}^M_{a, m}(\mu) := \mathcal{I}_{P a, m}(\mu), \qquad a \in V,\; m \in M.
\]
Consider the $2$-cochain $\Phi$ defined by
\[
\Phi_{a,b}(\mu) := \mathcal{I}^{\star_R}_{a,b}(\mu) - \mathcal{I}_{a,b}(\mu),
\]
where
\[
\mathcal{I}^{\star_R}_{a,b}(\mu) := \mathcal{I}_{a, P b}(\mu) + \mathcal{I}_{P a, b}(\mu) + \lambda \, \mathcal{I}_{a,b}(\mu).
\]
Then $\Phi$ is a $2$-cocycle in the vertex algebra cohomology of $(V, \star_R)$ with coefficients in the $P$- module $M$. Equivalently,
\[
\delta \Phi = 0,
\]
i.e., for all $a, b, c \in V$,
\[
\mathcal{I}_{a, \Phi_{b,c}(\mu)}(\lambda)
\;-\; (-1)^{\varepsilon(a)\varepsilon(b)}\, \mathcal{I}_{b, \Phi_{a,c}(\mu)}(\lambda)
\;=\; \Phi_{\mathcal{I}_{a,b}(\lambda)-\mathcal{I}_{a,b}(-\mu-\partial),\,c}(\lambda+\mu),
\]
as an identity in $V[\lambda, \mu]$.
\end{theorem}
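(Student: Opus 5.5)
The plan is to subtract off everything that follows from the Jacobi identity (V4) for $\mathcal{I}$, isolate a single residual identity, and then attack that residual using the Rota--Baxter relation. Throughout, write $\Delta_{a,b}:=\mathcal{I}_{a,b}(\lambda)-\mathcal{I}_{a,b}(-\mu-\partial)$, and let $p(a,b)=(-1)^{\varepsilon(a)\varepsilon(b)}$.

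\textbf{Step 1: strip off the undeformed part.} Since $\Phi=\mathcal{I}^{\star_R}-\mathcal{I}$ and both sides of the displayed identity are linear in $\Phi$, the $-\mathcal{I}$ part of $\Phi$ contributes
\[
-\mathcal{I}_{a,\mathcal{I}_{b,c}(\mu)}(\lambda)+p(a,b)\,\mathcal{I}_{b,\mathcal{I}_{a,c}(\mu)}(\lambda)+\mathcal{I}_{\Delta_{a,b},c}(\lambda+\mu).
\]
This vanishes by (V4) for $(V,\mathcal{I})$. The statement therefore reduces to
\[
\mathcal{I}_{a,\mathcal{I}^{\star_R}_{b,c}(\mu)}(\lambda)-p(a,b)\,\mathcal{I}_{b,\mathcal{I}^{\star_R}_{a,c}(\mu)}(\lambda)=\mathcal{I}^{\star_R}_{\Delta_{a,b},c}(\lambda+\mu).
\]
Sesquilinearity of $\Phi$ is already covered by the Remark preceding the theorem, so only this identity needs proof.

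\textbf{Step 2: expand $\mathcal{I}^{\star_R}$ into its three pieces.} Each piece is treated separately.

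\begin{itemize}
\item The $\lambda\,\mathcal{I}$ piece cancels against $\lambda\,\mathcal{I}_{\Delta_{a,b},c}$ by (V4).
\item The $\mathcal{I}_{\cdot,P\cdot}$ piece gives $\mathcal{I}_{a,\mathcal{I}_{b,Pc}}-p(a,b)\mathcal{I}_{b,\mathcal{I}_{a,Pc}}$. By (V4) applied to $(a,b,Pc)$, this equals $\mathcal{I}_{\Delta_{a,b},Pc}(\lambda+\mu)$, which is exactly the first summand of $\mathcal{I}^{\star_R}_{\Delta_{a,b},c}$.
\end{itemize}

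What remains is the residual identity
\[
\mathcal{I}_{a,\mathcal{I}_{Pb,c}(\mu)}(\lambda)-p(a,b)\,\mathcal{I}_{b,\mathcal{I}_{Pa,c}(\mu)}(\lambda)=\mathcal{I}_{P(\Delta_{a,b}),c}(\lambda+\mu). \tag{$\ast$}
\]

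\textbf{Step 3: rewrite $(\ast)$ as a compatibility condition on $P$.} Apply (V4) to the triples $(a,Pb,c)$ and $(Pa,b,c)$, and use skew-symmetry (V3) to move $P$ between slots. Then $(\ast)$ becomes
\[
p(a,b)\,\mathcal{I}_{Pb,\mathcal{I}_{a,c}}-p(a,b)\,\mathcal{I}_{b,\mathcal{I}_{Pa,c}}+\mathcal{I}_{\Delta_{a,Pb},c}=\mathcal{I}_{P\Delta_{a,b},c}.
\]
This says that $P$ interacts with the bracket in a derivation-like way, up to the commutator terms on the left. Next I would substitute the homomorphism property (H) from the previous theorem, $P\bigl(\mathcal{I}^{\star_R}_{a,b}\bigr)=\mathcal{I}_{Pa,Pb}$, to eliminate $P\Delta_{a,b}$ in favour of $\mathcal{I}_{Pa,Pb}$-type terms minus $P\bigl(\mathcal{I}_{a,Pb}+\mathcal{I}_{Pa,b}\bigr)$. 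The hope is that the remaining terms pair off via (V4) for the triples $(Pa,Pb,c)$ and $(Pa,b,c)$.

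\textbf{Main obstacle.} I expect Step 3 to be the crux. The Rota--Baxter relation only controls $P$ applied to the bracket of two elements, whereas $(\ast)$ involves $P$ sitting inside an outer bracket with a third element $c$ that $P$ does not touch. So a genuine extra relation is needed.

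\begin{itemize}
\item \emph{Scalar case.} I would first test $P=\alpha\,\mathrm{id}$, where the Rota--Baxter identity forces $\alpha\in\{0,-\lambda\}$. Here every term of $(\ast)$ is a multiple of a (V4) instance, and $(\ast)$ holds. This confirms the bookkeeping.
\item \emph{General case, module interpretation.} I would read the outer $\mathcal{I}_{a,\cdot}$ in $(\ast)$ through the $P$-module structure $\mathcal{I}^M_{a,m}=\mathcal{I}_{Pa,m}$ stated in the theorem, so that the $P$'s on the left of $(\ast)$ can be absorbed by the module action.
\item \emph{Fallback.} If that absorption fails, I would isolate exactly which additional hypothesis on $P$ makes $(\ast)$ hold, for instance $P$ acting as a derivation of $\mathcal{I}$ modulo $\mathcal{I}_{P\cdot,P\cdot}$. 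I would then record it as the condition under which the displayed identity is valid.
\end{itemize}
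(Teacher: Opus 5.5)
Your Steps 1 and 2 are correct. Subtracting (V4) for the triples $(a,b,c)$ and $(a,b,Pc)$ reduces the displayed identity exactly to your residual identity $(\ast)$. But the proposal stops there: Step 3 and the list of fallbacks are strategies, not an argument, so nothing beyond the reduction is proved.

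The gap also cannot be closed, because $(\ast)$ is false under the stated hypotheses. Take the commutative vertex algebra $V=\mathbb{F}[x]$ with $\partial=0$ and $\mathcal{I}_{a,b}(\lambda)=ab$; here (V2)--(V4) reduce to commutativity and associativity. Let $P$ be $f\mapsto\int_0^x f(t)\,dt$. This is a Rota--Baxter operator of weight $0$ (integration by parts), and it commutes with $\partial$ trivially. Then $\mathcal{I}_{a,b}(\lambda)-\mathcal{I}_{a,b}(-\mu-\partial)=0$, so the right-hand side of the theorem is $\Phi_{0,c}=0$. The left-hand side equals $c\,(a\,Pb-b\,Pa)$, which for $a=c=1$, $b=x$ is $x^2/2-x^2=-x^2/2\neq 0$.

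So your diagnosis is right. The Rota--Baxter identity only controls $P$ on expressions of the form $\mathcal{I}_{Pa,Pb}$. For weight $0$, (H) says nothing about $P\,\mathcal{I}_{a,b}$, so your planned elimination of $P\Delta_{a,b}$ in Step 3 cannot work. Meanwhile $(\ast)$ demands a compatibility between $P$ and an outer bracket whose third argument $P$ never touches. The scalar case works only because $(\ast)$ is then just a multiple of (V4).

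Your module fallback does not rescue it either. Replace the outer $\mathcal{I}_{a,\cdot}$ by $\mathcal{I}^M_{a,\cdot}=\mathcal{I}_{Pa,\cdot}$, and even build the argument of $\Phi$ from $\mathcal{I}^{\star_R}$. In the same example the left side becomes $(b\,Pa-a\,Pb)(Pc-c)$ while the right side is still $0$. For $a=c=1$, $b=x$ the left side is $x^2(x-1)/2\neq 0$. The only honest completion is your third fallback: record $(\ast)$ as an additional hypothesis on $P$.

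Do not expect the paper's argument to supply the missing step, for three reasons:
\begin{itemize}
\item Its expansion of the left-hand side contains terms $\mathcal{I}_{\mathcal{I}_{Pa,Pb}(\lambda)-\cdots,\,c}$ and terms quadratic in the weight. These cannot arise from $\mathcal{I}_{a,\Phi_{b,c}(\mu)}(\lambda)$, which is linear in the weight and never puts $P$ on the outer argument. They are carried over from the Jacobi computation for $\mathcal{I}^{\star_R}$, where the outer bracket is $\mathcal{I}^{\star_R}_{a,\cdot}$.
\item It writes $\Phi_{b,c}$ with coefficient $\lambda$ instead of $\lambda-1$.
\item It evaluates the right-hand side with $\Delta_{a,b}$ built from $\mathcal{I}^{\star_R}$ rather than from $\mathcal{I}$.
\end{itemize}
In effect it re-checks the Jacobi identity for $\mathcal{I}^{\star_R}$, not the displayed cocycle identity.
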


\begin{proof}
Let
\[
\mathcal{I}^{\star_R}_{a,b}(\lambda) = A_{a,b}(\lambda) + B_{a,b}(\lambda) + \lambda \mathcal{I}_{a,b}(\lambda),
\]
with $A_{a,b}(\lambda):=\mathcal{I}_{a,P b}(\lambda)$ and $B_{a,b}(\lambda):=\mathcal{I}_{P a,b}(\lambda)$.
The Rota--Baxter condition gives
\[
\mathcal{I}_{P a, P b}(\lambda) = P\bigl(\mathcal{I}^{\star_R}_{a,b}(\lambda)\bigr). \tag{H}
\]

By definition of $\Phi$, we have
\[
\Phi_{b,c}(\mu) = A_{b,c}(\mu) + B_{b,c}(\mu) + \lambda \mathcal{I}_{b,c}(\mu),
\]
\[
\Phi_{a,c}(\mu) = A_{a,c}(\mu) + B_{a,c}(\mu) + \lambda \mathcal{I}_{a,c}(\mu).
\]

Expanding the left-hand side of the cocycle condition and applying the original Jacobi identity (V4) to the triples $(a,b,c)$, $(a,Pb,c)$, $(Pa,b,c)$, $(a,b,Pc)$, and $(Pa,Pb,c)$, we obtain
\[
\begin{aligned}
L &=
\mathcal{I}_{A_{a,b}(\lambda)-A_{a,b}(-\mu-\partial),\,P c}(\lambda+\mu)
+ \mathcal{I}_{B_{a,b}(\lambda)-B_{a,b}(-\mu-\partial),\,P c}(\lambda+\mu) \\
&\quad + \mathcal{I}_{\mathcal{I}_{P a, P b}(\lambda)-\mathcal{I}_{P a, P b}(-\mu-\partial),\,c}(\lambda+\mu)
+ \lambda \mathcal{I}_{\mathcal{I}_{a,b}(\lambda)-\mathcal{I}_{a,b}(-\mu-\partial),\,P c}(\lambda+\mu) \\
&\quad + \lambda \mathcal{I}_{A_{a,b}(\lambda)-A_{a,b}(-\mu-\partial),\,c}(\lambda+\mu)
+ \lambda \mathcal{I}_{B_{a,b}(\lambda)-B_{a,b}(-\mu-\partial),\,c}(\lambda+\mu) \\
&\quad + \lambda^2 \mathcal{I}_{\mathcal{I}_{a,b}(\lambda)-\mathcal{I}_{a,b}(-\mu-\partial),\,c}(\lambda+\mu).
\end{aligned}
\]

Using (H), we have
\[
\mathcal{I}_{P a, P b}(\lambda) - \mathcal{I}_{P a, P b}(-\mu-\partial)
= P\bigl(\mathcal{I}^{\star_R}_{a,b}(\lambda) - \mathcal{I}^{\star_R}_{a,b}(-\mu-\partial)\bigr).
\]
Let $\Delta_{a,b} := \mathcal{I}^{\star_R}_{a,b}(\lambda) - \mathcal{I}^{\star_R}_{a,b}(-\mu-\partial)$.
Then the above expression simplifies to
\[
L = \mathcal{I}_{\Delta_{a,b}, P c}(\lambda+\mu)
+ \mathcal{I}_{P(\Delta_{a,b}), c}(\lambda+\mu)
+ \lambda \mathcal{I}_{\Delta_{a,b}, c}(\lambda+\mu).
\]

On the other hand, the right-hand side of the cocycle condition is
\[
R = \Phi_{\Delta_{a,b}, c}(\lambda+\mu)
= \mathcal{I}_{\Delta_{a,b}, P c}(\lambda+\mu)
+ \mathcal{I}_{P(\Delta_{a,b}), c}(\lambda+\mu)
+ \lambda \mathcal{I}_{\Delta_{a,b}, c}(\lambda+\mu).
\]

Thus $L = R$, so $\Phi$ satisfies the cocycle condition. Therefore $\delta \Phi = 0$.
\end{proof}

We have shown that every Rota--Baxter operator $P$ of weight $\lambda$ on a non-unital vertex algebra $V$ (with $[P,\partial]=0$) produces a $2$-cocycle $\Phi$ in the cohomology of $(V, \star_R)$ with coefficients in  $V$. A natural question is: when does this $2$-cocycle represent the trivial cohomology class, i.e., when is $\Phi$ a $2$-coboundary?

\medskip

For a $1$-cochain $\psi$, the coboundary $\delta \psi$ is a $2$-cochain given by
\[
(\delta \psi)_{a,b}(\mu) = \mathcal{I}^M_{a, \psi(b)}(\mu) - \psi(\mathcal{I}^{\star_R}_{a,b}(\mu)) + (-1)^{\varepsilon(a)\varepsilon(b)} \mathcal{I}^M_{\psi(a), b}(\mu).
\]

Substituting the module action and denoting $\epsilon = (-1)^{\varepsilon(a)\varepsilon(b)}$, we obtain
\[
(\delta \psi)_{a,b}(\mu) = \mathcal{I}_{Pa, \psi(b)}(\mu) - \psi(\mathcal{I}^{\star_R}_{a,b}(\mu)) + \epsilon \, \mathcal{I}_{P\psi(a), b}(\mu). \tag{CB}
\]

\medskip

\noindent\textbf{The natural candidate: $\psi = P$.}

Since $P$ is the primary object in our construction, the most natural choice for a $1$-cochain is $\psi = P$ itself. We compute $(\delta P)_{a,b}(\mu)$.

\begin{align*}
(\delta P)_{a,b}(\mu) &= \mathcal{I}_{Pa, P(b)}(\mu) - P(\mathcal{I}^{\star_R}_{a,b}(\mu)) + \epsilon \, \mathcal{I}_{P^2 a, b}(\mu) \\
&= \bigl( \mathcal{I}_{Pa, Pb}(\mu) - P(\mathcal{I}^{\star_R}_{a,b}(\mu)) \bigr) + \epsilon \, \mathcal{I}_{P^2 a, b}(\mu).
\end{align*}

By the Rota--Baxter condition in the form of the homomorphism property (H), we have
\[
\mathcal{I}_{Pa, Pb}(\mu) = P\bigl( \mathcal{I}^{\star_R}_{a,b}(\mu) \bigr).
\]

Therefore the first parenthesis vanishes identically, leaving
\[
(\delta P)_{a,b}(\mu) = \epsilon \, \mathcal{I}_{P^2 a, b}(\mu). \tag{DP}
\]

\medskip

\noindent\textbf{When does $\Phi$ equal $\delta P$?}

Recall the definition of $\Phi$:
\[
\Phi_{a,b}(\mu) = \mathcal{I}_{a, Pb}(\mu) + \mathcal{I}_{Pa, b}(\mu) + (\lambda - 1) \mathcal{I}_{a,b}(\mu).
\]

Thus $\Phi = \delta P$ if and only if for all $a, b \in V$,
\[
\mathcal{I}_{a, Pb}(\mu) + \mathcal{I}_{Pa, b}(\mu) + (\lambda - 1) \mathcal{I}_{a,b}(\mu) = \epsilon \, \mathcal{I}_{P^2 a, b}(\mu). \tag{*}
\]

\medskip

\noindent\textbf{Case 1: $P$ is a scalar multiple of the identity.}

Let $P = \kappa \cdot \operatorname{id}_V$ for some $\kappa \in \mathbb{F}$. Then:
\begin{itemize}
    \item $P^2 a = \kappa^2 a$,
    \item $\mathcal{I}_{a, Pb} = \mathcal{I}_{a, \kappa b} = \kappa \mathcal{I}_{a,b}$,
    \item $\mathcal{I}_{Pa, b} = \mathcal{I}_{\kappa a, b} = \kappa \mathcal{I}_{a,b}$,
    \item $\mathcal{I}_{P^2 a, b} = \mathcal{I}_{\kappa^2 a, b} = \kappa^2 \mathcal{I}_{a,b}$.
\end{itemize}

Substituting into (*) gives:
\[
\kappa \mathcal{I}_{a,b} + \kappa \mathcal{I}_{a,b} + (\lambda - 1) \mathcal{I}_{a,b} = \epsilon \kappa^2 \mathcal{I}_{a,b}.
\]

Assuming $\mathcal{I}_{a,b}(\mu)$ is not identically zero (i.e., $V$ is non-trivial), we obtain the scalar equation:
\[
2\kappa + \lambda - 1 = \epsilon \kappa^2. \tag{SC}
\]

Since $\epsilon = (-1)^{\varepsilon(a)\varepsilon(b)}$ depends on the parity of $a$ and $b$, for this to hold for all homogeneous $a, b$, we must consider two subcases:

\medskip

\noindent\textit{Subcase 1a: $V$ is purely even ($\varepsilon(a)=0$ for all $a$).} Then $\epsilon = 1$ for all $a,b$, and equation (SC) becomes:
\[
\kappa^2 - 2\kappa - (\lambda - 1) = 0 \quad \Longrightarrow \quad \kappa = 1 \pm \sqrt{\lambda}.
\]

\medskip

\noindent\textit{Subcase 1b: $V$ contains odd elements.} Then $\epsilon$ can be $\pm 1$ depending on the pair. For the equation to hold for both even-even and odd-odd pairs (where $\epsilon = 1$) and for even-odd pairs (where $\epsilon = -1$), we would need two different quadratic equations to hold simultaneously. This forces $\kappa = 0$ and $\lambda = 1$. Then $P = 0$ and $\Phi = 0$.

\medskip

\noindent\textbf{Case 2: $P$ is a projection ($P^2 = P$).}

For a projection, $P^2 = P$. Then $\mathcal{I}_{P^2 a, b} = \mathcal{I}_{Pa, b}$. Equation (*) becomes:
\[
\mathcal{I}_{a, Pb} + \mathcal{I}_{Pa, b} + (\lambda - 1) \mathcal{I}_{a,b} = \epsilon \, \mathcal{I}_{Pa, b}.
\]

Rearranging:
\[
\mathcal{I}_{a, Pb} + (1 - \epsilon) \mathcal{I}_{Pa, b} + (\lambda - 1) \mathcal{I}_{a,b} = 0.
\]

For this to hold for all $a,b$, we typically need $\lambda = 1$ and $\epsilon = 1$ (i.e., $V$ purely even), in which case the equation reduces to $\mathcal{I}_{a, Pb} = 0$ for all $a,b$, which forces $P = 0$. So non-zero projections rarely give $\Phi = \delta P$.

\medskip

\noindent\textbf{Case 3: General $P$ with $P^2 = 0$ (nilpotent).}

If $P^2 = 0$, then $\mathcal{I}_{P^2 a, b} = 0$. Equation (*) becomes:
\[
\mathcal{I}_{a, Pb} + \mathcal{I}_{Pa, b} + (\lambda - 1) \mathcal{I}_{a,b} = 0.
\]

This is a strong condition. For example, if $\lambda = 1$, it becomes $\mathcal{I}_{a, Pb} + \mathcal{I}_{Pa, b} = 0$. This is satisfied if $P$ is a derivation-like operator satisfying $\mathcal{I}_{a, Pb} = -\mathcal{I}_{Pa, b}$.

\medskip

We now summarize the necessary and sufficient conditions under which the 2-cocycle $\Phi$ constructed from a Rota--Baxter operator $P$ is a coboundary.

\begin{theorem}
Let $P$ be a Rota--Baxter operator of weight $\lambda$ on a non-unital vertex algebra $V$ with $[P,\partial]=0$, and let $\Phi$ be the associated $2$-cocycle. Then $\Phi$ is a $2$-coboundary (i.e., $\Phi = \delta \psi$ for some $1$-cochain $\psi$) if and only if there exists a $1$-cochain $\psi$ such that for all $a,b \in V$,
\[
\mathcal{I}_{a, Pb}(\mu) + \mathcal{I}_{Pa, b}(\mu) + (\lambda - 1) \mathcal{I}_{a,b}(\mu) = \mathcal{I}_{Pa, \psi(b)}(\mu) - \psi(\mathcal{I}^{\star_R}_{a,b}(\mu)) + \epsilon \mathcal{I}_{P\psi(a), b}(\mu).
\]

In particular, taking $\psi = P$ gives a necessary condition:
\[
\mathcal{I}_{a, Pb}(\mu) + \mathcal{I}_{Pa, b}(\mu) + (\lambda - 1) \mathcal{I}_{a,b}(\mu) = \epsilon \mathcal{I}_{P^2 a, b}(\mu). \tag{†}
\]

If $V$ is purely even and $P = \kappa \cdot \operatorname{id}$ with $\kappa$ satisfying $\kappa^2 - 2\kappa - (\lambda - 1) = 0$, then $\Phi = \delta P$ and $\Phi$ is a coboundary. For all other non-scalar Rota--Baxter operators, $\Phi$ represents a non-trivial cohomology class.
\end{theorem}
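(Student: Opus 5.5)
The plan is to treat the statement as three separate claims and handle them in increasing order of difficulty: the general coboundary criterion, the specialization to $\psi=P$ and the scalar case, and finally the non-triviality assertion.

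\textbf{The general criterion.} This part is essentially definitional. By definition, $\Phi$ is a $2$-coboundary exactly when $\Phi=\delta\psi$ for some $1$-cochain $\psi$, that is, a linear map commuting with $\partial$. I would substitute the module action $\mathcal{I}^M_{a,m}(\mu)=\mathcal{I}_{Pa,m}(\mu)$ into the coboundary formula, which gives (CB). Equating (CB) with the explicit form of $\Phi$, namely $\mathcal{I}_{a,Pb}+\mathcal{I}_{Pa,b}+(\lambda-1)\mathcal{I}_{a,b}$, yields the displayed identity verbatim. Nothing beyond the definitions is needed here.

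\textbf{The specialization $\psi=P$ and the scalar case.} The choice $\psi=P$ is a legitimate $1$-cochain because $[P,\partial]=0$. The computation (DP), which uses the homomorphism property (H) of the first theorem, gives $(\delta P)_{a,b}=\epsilon\,\mathcal{I}_{P^2a,b}$, so $\Phi=\delta P$ holds exactly when (†) holds. I would point out that, read logically, (†) is the condition for this particular $\psi$ to work. It is therefore a \emph{sufficient} condition for $\Phi$ to be a coboundary; it becomes a necessary one only once one shows that $\psi=P$ is the only candidate up to cocycles. For the scalar case, take $V$ purely even, so $\epsilon=1$, and $P=\kappa\,\mathrm{id}$. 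By Case~1, equation (†) reduces to (SC), namely $\kappa^2-2\kappa-(\lambda-1)=0$. For such $\kappa$ we get $\Phi=\delta P$, so $\Phi$ is exact.

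\textbf{The non-triviality claim.} This is the main obstacle. One must show that for non-scalar $P$ \emph{no} $1$-cochain $\psi$ satisfies the general criterion, not merely $\psi=P$. I would first try a difference argument. Suppose $\Phi=\delta\psi$ and set $\theta=\psi-P$. Then (DP) gives
\[
\mathcal{I}_{Pa,\theta(b)}(\mu)-\theta\bigl(\mathcal{I}^{\star_R}_{a,b}(\mu)\bigr)+\epsilon\,\mathcal{I}_{P\theta(a),b}(\mu)=\Phi_{a,b}(\mu)-\epsilon\,\mathcal{I}_{P^2a,b}(\mu),
\]
and one would aim to show this has no solution $\theta$ unless the right-hand side vanishes. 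To extract constraints, I would compare coefficients of powers of $\mu$. I would also evaluate on eigenvectors of $P$ where possible, in the spirit of Cases~2 and~3. For a projection $P$, I would split $V=V_+\oplus V_-$ and test on pairs with $a\in V_-$, which annihilates the $\mathcal{I}_{Pa,\cdot}$ terms; this leaves $-\theta(\mathcal{I}^{\star_R}_{a,b})$ against a nonzero multiple of $\mathcal{I}_{a,b}$.

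I expect this step to need extra non-degeneracy hypotheses, for instance that $V$ is generated by brackets $\mathcal{I}_{a,b}$, or a decomposition into $P$-eigenspaces. Without such hypotheses the ``for all other non-scalar operators'' assertion may only hold generically. So I would try to prove it under such an assumption and flag where it is used.
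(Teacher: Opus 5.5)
Your first two steps match the paper's argument: equate (CB) with $\Phi$, then use (DP) and Case~1 for $\psi=P$. Your observation that $(\dagger)$ is exactly the condition $\Phi=\delta P$ is correct. That makes it \emph{sufficient} for exactness, not necessary, which is sharper than the paper, where it is called necessary. The gap is your third step. You give no argument that \emph{no} $1$-cochain $\psi$ works, only a strategy you expect to need extra hypotheses. This gap cannot be closed, because the claim is false. The paper does not prove it either. Cases~2 and~3, hedged with ``typically'' and ``rarely'', only test $\psi=P$, and the theorem infers non-triviality from the failure of $(\dagger)$. That is exactly the invalid inference you flagged.

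Here is a counterexample built from the paper's own projection example. Let $V=V_1\oplus V_2$ be the product of two nonzero vertex algebras, with cross brackets zero. Let $P=\pi_2$ be the projection onto $V_2$ along $V_1$. This is a Rota--Baxter operator of weight $\lambda=-1$ that commutes with $\partial$. Writing $a=a_1+a_2$ and $b=b_1+b_2$, one computes
\[
\Phi_{a,b}(\mu)=-2\,\mathcal{I}_{a_1,b_1}(\mu),\qquad
\mathcal{I}^{\star_R}_{a,b}(\mu)=\mathcal{I}_{a_2,b_2}(\mu)-\mathcal{I}_{a_1,b_1}(\mu).
\]
Take $\psi=-2\pi_1$, which commutes with $\partial$. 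In (CB), the term $\mathcal{I}_{Pa,\psi(b)}=\mathcal{I}_{a_2,-2b_1}$ vanishes. So does $\mathcal{I}_{P\psi(a),b}$, since $P\psi(a)=0$, whatever the parities. The remaining term is $-\psi(\mathcal{I}^{\star_R}_{a,b})=-2\,\mathcal{I}_{a_1,b_1}$. Hence $\delta\psi=\Phi$, and $\Phi$ is a coboundary for a nonzero, non-scalar $P$. This contradicts the last sentence of the statement and the Corollary. Meanwhile $(\dagger)$ fails at $a=b=|0\rangle_1$: the left side is $-2|0\rangle_1$ and the right side is $0$. So this also refutes $(\dagger)$ being necessary. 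The scalar case $P=0$ shows the same failure for every $\lambda\notin\{0,1\}$. There $\psi=\frac{1-\lambda}{\lambda}\,\mathrm{id}$ gives $\delta\psi=(\lambda-1)\mathcal{I}=\Phi$, while $(\dagger)$ fails.

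The hypotheses you proposed to add do not rescue the claim. In this example every element is a bracket, since $\mathcal{I}_{|0\rangle,b}=b$, and $V$ splits into $P$-eigenspaces. Your test with $a\in V_-=V_1$ is exactly where $\theta=\psi-P$ absorbs the remaining term. There $\mathcal{I}^{\star_R}_{a,b}=-\mathcal{I}_{a,b}$, and $\theta=-2\pi_1-\pi_2$ satisfies the resulting equation $\theta(\mathcal{I}_{a,b})=-2\,\mathcal{I}_{a,b}$. The correct outcome of your third step is therefore a counterexample together with a corrected statement, for instance that $(\dagger)$ implies $\Phi$ is a coboundary. It is not a proof under extra assumptions.
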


\begin{corollary}
For a non-zero Rota--Baxter operator $P$ that is not a scalar multiple of the identity satisfying the quadratic condition above, the cohomology class $[\Phi] \in H^2(V, \star_R; M)$ is non-zero. Thus $\Phi$ is a genuine obstruction, not just a coboundary.
\end{corollary}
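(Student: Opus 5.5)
The plan is to derive the corollary directly from the preceding theorem, reading its contrapositive carefully. Suppose $P\neq 0$ is not of the form $\kappa\cdot\operatorname{id}_V$ with $\kappa^2-2\kappa-(\lambda-1)=0$, and suppose, for contradiction, that $[\Phi]=0$ in $H^2(V,\star_R;M)$. Then $\Phi=\delta\psi$ for some $1$-cochain $\psi$, so the displayed identity of the theorem holds for this $\psi$. The last sentence of the theorem asserts that for every such $P$ the class is non-trivial. The corollary therefore follows once that assertion is established, and the work lies in justifying it for an \emph{arbitrary} $\psi$, not only for $\psi=P$.

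The steps I would carry out are as follows. First, for $\psi=P$, use (DP): $\Phi=\delta P$ is equivalent to $(\dagger)$. Specializing $(\dagger)$ as in Cases 1--3, and using non-degeneracy of $\mathcal{I}$ (no non-zero $a$ with $\mathcal{I}_{a,\cdot}\equiv 0$), forces $P$ to be scalar with the quadratic (SC) in the purely even case, and forces $P=0$ otherwise. Second, for general $\psi$, write $\psi=P+\theta$. By linearity of $\delta$, $\Phi=\delta\psi$ becomes $\Phi-\delta P=\delta\theta$, where
\[
(\delta\theta)_{a,b}(\mu)=\mathcal{I}_{Pa,\theta(b)}(\mu)-\theta\bigl(\mathcal{I}^{\star_R}_{a,b}(\mu)\bigr)+\epsilon\,\mathcal{I}_{P\theta(a),b}(\mu).
\]
Third, apply $P$ to this identity and use the homomorphism property (H), $P\mathcal{I}^{\star_R}_{a,b}=\mathcal{I}_{Pa,Pb}$. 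This converts the middle term into a statement about $P\theta$ intertwining $\mathcal{I}_{Pa,Pb}$. I then aim to show that $P\theta$ restricted to $P(V)$ acts as a derivation-type map, so its contribution can be absorbed into a scalar rescaling of $P$. The upshot would be that the existence of $\theta$ forces $(\dagger)$ up to replacing $P$ by a scalar multiple, and the first step then finishes the argument.

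The main obstacle is the second and third steps. Excluding every $1$-cochain $\psi$, rather than just $\psi=P$, is genuinely stronger than the case analysis preceding the theorem. I expect that some additional hypothesis on $V$ will be needed, such as faithfulness of the action $a\mapsto\mathcal{I}_{a,\cdot}$ or trivial center, so that identities of the form $\mathcal{I}_{X a,b}=0$ for all $b$ imply $X=0$. I would first try to extract $\theta$ by setting $b$ in $\ker P$ and $a$ in $\operatorname{im}P$, where several terms of $\delta\theta$ vanish. If that does not pin $\theta$ down, I would state the corollary under the faithfulness hypothesis, where the reduction to $(\dagger)$ goes through.
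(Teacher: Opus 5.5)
There is a genuine gap, and it is exactly where you located it. Your Step~1 only reuses the paper's $\psi=P$ analysis. That analysis treats only scalar, idempotent and square-zero $P$, and in Case~2 it concludes only ``typically''. Steps~2--3 carry the whole weight of the corollary, yet they contain no argument: nothing supports the claim that $P\theta$ is ``derivation-type'' on $P(V)$, or that it can be absorbed into a rescaling of $P$.

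The paper does not supply this step either. Its only support for the corollary is (DP) together with Cases~1--3. The theorem's assertion that $\psi=P$ yields a ``necessary condition'' inverts the logic: $\Phi=\delta P$ is a \emph{sufficient} condition for $[\Phi]=0$, not a necessary one. More seriously, the missing step cannot be supplied at all, because the statement is false under the paper's own definitions. It stays false even under the faithfulness hypothesis you propose as a fallback.

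Here is a counterexample. Take $V=V_1\oplus V_2$, the direct product of two vertex algebras with componentwise $\partial$ and $\mathcal{I}$. For instance, take $V_1=V_2=\mathbb{F}|0\rangle$, i.e.\ $V=\mathbb{F}\times\mathbb{F}$ with $\mathcal{I}_{a,b}(\mu)=ab$; or take two copies of $\pi_{\mathfrak{h}}$.
\begin{itemize}
\item $V$ is unital with vacuum $|0\rangle_1+|0\rangle_2$, so $\mathcal{I}_{x,|0\rangle_1+|0\rangle_2}(\mu)=x$ and the action is faithful.
\item Let $P$ be the projection onto $V_1$ along $V_2$. This is the paper's own projection example, of weight $-1$; it is non-zero and non-scalar.
\item Since cross brackets vanish, $\mathcal{I}^{\star_R}_{a,b}=\mathcal{I}_{a_1,b_1}-\mathcal{I}_{a_2,b_2}$ and $\Phi_{a,b}=-2\,\mathcal{I}_{a_2,b_2}$.
\item Put $\psi=-2(\mathrm{id}_V-P)$, which commutes with $\partial$. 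In (CB), $\mathcal{I}_{Pa,\psi(b)}=-2\,\mathcal{I}_{a_1,b_2}=0$. The last term vanishes because $P\psi=-2(P-P^2)=0$. Finally, $-\psi(\mathcal{I}^{\star_R}_{a,b})=2(\mathrm{id}_V-P)(\mathcal{I}_{a_1,b_1}-\mathcal{I}_{a_2,b_2})=-2\,\mathcal{I}_{a_2,b_2}$.
\end{itemize}
Hence $\Phi=\delta\psi$ and $[\Phi]=0$. At the same time $(\delta P)_{a,b}=\epsilon\,\mathcal{I}_{a_1,b_1}\neq\Phi_{a,b}$ (take $a=b=|0\rangle_1$), so $(\dagger)$ fails.

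So the implication your Steps~2--3 are meant to deliver is false. That implication was: a coboundary $\Phi=\delta(P+\theta)$ forces $(\dagger)$ up to rescaling, hence forces $P$ to be scalar. In this example $\theta=P-2\,\mathrm{id}_V$, $P$ is non-scalar, and $\Phi$ is nonetheless a coboundary. The corollary needs substantially different hypotheses, or a corrected cohomology theory, before any proof is possible.
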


\begin{example}
Consider the Heisenberg vertex algebra $\pi_{\mathfrak{h}}$ with $P$ the projection onto positive modes (weight $\lambda = -1$). Here $P^2 = P \neq 0$, and $P$ is not a scalar multiple of the identity. One can check that condition (†) fails for $a = b = J$. Hence $\Phi$ is not a coboundary, and $[\Phi]$ is a non-trivial cohomology class.
\end{example}

{\small\bibliography{cimart}}

\end{document}